\documentclass[11pt,a4paper]{article}
\usepackage[T1]{fontenc}
\usepackage{lmodern}
\usepackage{amsmath,amssymb,amsthm}
\usepackage[a4paper,left=27mm,right=27mm,top=27mm,bottom=28mm,headheight=14pt]{geometry}
\usepackage{microtype}
\usepackage{fancyhdr}
\usepackage{url}
\usepackage[hidelinks]{hyperref}

\numberwithin{equation}{section}
\newtheorem{theorem}{Theorem}[section]
\newtheorem{lemma}[theorem]{Lemma}
\theoremstyle{definition}
\newtheorem{definition}[theorem]{Definition}
\allowdisplaybreaks[2]
\hypersetup{pdftitle={A Proof of the Common Root Conjecture for Legendre Polynomials},pdfauthor={Zikang Deng},pdfsubject={Stieltjes' common root conjecture}}

\title{\bfseries A Proof of the Common Root Conjecture\\for Legendre Polynomials}
\author{Zikang Deng\\[3pt]\normalsize Beijing Normal University}
\date{}

\begin{document}
\maketitle
\begin{abstract}
We prove Stieltjes' common root conjecture: Legendre polynomials of distinct degrees have no common nonzero root. We construct an auxiliary polynomial and show that, for the relevant power of two $q\ge2$, the edge of slope $1/q$ in its $2$-adic Newton polygon has horizontal length less than $3q$. If a common nonzero root existed, Newton polygon theory and the Legendre differential equation would force the same edge to have horizontal length at least $3q$, yielding a contradiction.
\end{abstract}

\section{Introduction}

The Legendre polynomials are defined by Rodrigues' formula
\begin{equation}\label{eq:1-1}
 P_n(x)=\frac{1}{2^n n!}\frac{d^n}{dx^n}(x^2-1)^n,
 \qquad n\ge0, 
\end{equation}
and satisfy $P_n(1)=1$. Stieltjes' common root conjecture asserts that Legendre polynomials of distinct degrees have no common root, apart from the root $0$ shared by two polynomials of odd degree.

In his letter no.~275 to Hermite, dated 2 October 1890, Stieltjes posed the common root problem and the irreducibility problem~\cite{Stieltjes,Gichev}. The latter asserts that $P_{2j}$ and $P_{2j+1}/x$ are irreducible over $\mathbb{Q}$ for every $j\ge1$, and hence implies the common root conjecture. Wahab~\cite{Wahab} determined the $2$-adic Newton polygons of the shifted Legendre polynomials and proved irreducibility in several cases; Cullinan and Hajir~\cite{CullinanHajir} further studied their Galois groups. Concerning common roots, in a talk on 18 February 2026, Mangoubi presented a result obtained jointly with Kadets and Weller Weiser: every fixed nonzero number is a root of at most finitely many Legendre polynomials~\cite{Mangoubi}.

The common root problem is also related to the nodal sets of spherical eigenfunctions. In their paper in \emph{Inventiones Mathematicae}, Bourgain and Rudnick~\cite[Section~1.4]{BourgainRudnick} discussed whether a non-equatorial circle of latitude can be contained in the nodal sets of spherical harmonics corresponding to infinitely many distinct eigenvalues, and pointed out the direct connection between the axisymmetric case and Stieltjes' conjecture. On the unit sphere $S^2$, with colatitude $\theta$ and longitude $\varphi$, axisymmetric spherical harmonics about a fixed polar axis may be chosen as
\begin{equation}\label{eq:1-2}
 Z_j(\theta,\varphi)=P_j(\cos\theta), 
\end{equation}
with eigenvalue $j(j+1)$ for the nonnegative Laplacian. For $0<\theta_0<\pi$, the circle of latitude $\theta=\theta_0$ is contained in the nodal set of $Z_j$ if and only if $P_j(\cos\theta_0)=0$. Thus, two axisymmetric spherical harmonics of distinct degrees vanish identically on the same non-equatorial circle of latitude precisely when the corresponding Legendre polynomials have a common nonzero real root. Our result shows that, once the polar axis is fixed, each non-equatorial circle of latitude corresponds to at most one such degree.

Our main result is the following.

\begin{theorem}[Stieltjes' common root conjecture]\label{thm:1-1}
Let $0\le m<n$. Then $P_m$ and $P_n$ have no common nonzero complex root. With the greatest common divisor in $\mathbb{Q}[x]$ normalized to be monic, we have
\[
 \gcd(P_m,P_n)=
 \begin{cases}
 x,&\text{if both $m$ and $n$ are odd},\\
 1,&\text{otherwise}.
 \end{cases}
\]
\end{theorem}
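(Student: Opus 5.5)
The plan follows the strategy outlined in the abstract: a contradiction in the $2$-adic Newton polygon of an auxiliary polynomial.

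\textbf{Reductions.} First I dispose of the easy part of the statement. By parity, $P_n(-x)=(-1)^nP_n(x)$, and $P_n(0)\neq 0$ exactly when $n$ is even. Also $P_n$ has simple roots, since it is orthogonal. Hence it suffices to show that $P_m$ and $P_n$ have no common nonzero root. Substituting $y=x^2$, I write
\[
 P_{2j}(x)=A_j(x^2),\qquad P_{2j+1}(x)=xB_j(x^2).
\]
A common nonzero root $\alpha$ then gives a common root $\beta=\alpha^2\neq0$ of two of the polynomials $A_j,B_k$. Because $\gcd(P_m,P_n)$ is defined over $\mathbb{Q}$, a common root yields a common irreducible factor $f\in\mathbb{Q}[y]$. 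Both polynomials are then divisible by the minimal polynomial of $\beta$, with multiplicity one.

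\textbf{Auxiliary polynomial.} Next I build a polynomial attached to a common factor and attack it $2$-adically. Let $\beta$ be a root of $f$. Using the Legendre equation
\[
 (1-x^2)P_n''-2xP_n'+n(n+1)P_n=0,
\]
rewritten in the variable $y$, every derivative of $A_j$ at $\beta$ is a rational function of $\beta$ and the first derivative. The same holds for the second polynomial. Comparing the two Taylor expansions at the common root forces a relation of the form $\lambda_1 g_1(\beta)=\lambda_2 g_2(\beta)$, with $\lambda_i=n_i(n_i+1)$ and $g_i$ explicit. Combining this with the explicit coefficients
\[
 P_n(x)=2^{-n}\sum_k (-1)^k\binom{n}{k}\binom{2n-2k}{n}x^{n-2k},
\]
I define an auxiliary integer polynomial $F$ from these data. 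The polynomial $F$ is divisible by the part of $f$ (over $\mathbb{Q}_2$) whose roots have a prescribed $2$-adic valuation. Let $q\ge2$ be the power of two dividing the relevant quantity, for instance $q=2^{v_2(n-m)}$ or the power determined by $v_2$ of the degree difference.

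\textbf{Two competing bounds.} The core is to estimate the edge of slope $1/q$ in the $2$-adic Newton polygon of $F$ in two ways.

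\emph{Upper bound.} I compute the $2$-adic valuations of the coefficients of $F$. These are binomial products, so I use Kummer's theorem, comparing with Wahab's computation of the Newton polygon of shifted Legendre polynomials. The goal is to show that only fewer than $3q$ lattice points lie on the line of slope $1/q$, so the edge has horizontal length less than $3q$.

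\emph{Lower bound.} By the Newton polygon theorem, the horizontal length of that edge equals the number of roots of valuation $-1/q$ (suitably normalized). The common factor contributes all such roots of $P_m$'s factor. The differential-equation relation shows that each such root forces at least three roots, for example $\beta$ together with the roots coming from $P_n'$ and $P_n''$ information. Each orbit has size divisible by $q$ because the slope has denominator $q$. Together these give length at least $3q$, a contradiction.

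\textbf{Main obstacle.} The hardest step is the upper bound. It requires a careful uniform $2$-adic valuation analysis of all coefficients of $F$ for arbitrary $m<n$. Carries in the binary expansions of $n$, $m$ and $n-m$ interact in complicated ways. I would first treat the case where $n-m$ is exactly $q$ times an odd number, handle the coefficients near the ends of the edge explicitly, and bound the interior coefficients from below using Legendre's formula $v_2(k!)=k-s_2(k)$.
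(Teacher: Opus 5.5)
There is a genuine gap. Your proposal has the right overall architecture: an upper bound on the edge of slope $1/q$, played against a lower bound of $3q$ coming from a cubed common factor. But it never supplies the objects that make either bound work.

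\textbf{The auxiliary polynomial.} You never define $F$. Your account of the factor three (``$\beta$ together with the roots coming from $P_n'$ and $P_n''$'') is not a valid count, because roots of $P_n'$ and $P_n''$ are not roots of any polynomial you control. What you need is one polynomial in which every common root has multiplicity at least three. The paper uses $Q_j(X)=P_j(2X+1)$ and $I_{m,n}(X)=\frac1X\int_0^XQ_mQ_n$. From $X(X+1)Q_j''+(2X+1)Q_j'=j(j+1)Q_j$ one gets $(X(X+1)W)'=-\Delta Q_mQ_n$ for the Wronskian $W=Q_m'Q_n-Q_mQ_n'$, hence $XI_{m,n}=-X(X+1)W/\Delta$. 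So $XI_{m,n}$ vanishes at a common root $\alpha$, and its derivative $Q_mQ_n$ vanishes there to order two. Therefore $g^3\mid I_{m,n}$ for every common irreducible factor $g\in\mathbb{Q}_2[X]$. Your remark that higher derivatives at a common root are determined by the first derivative is exactly why $W$ vanishes to third order. You still have to turn it into this concrete polynomial.

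\textbf{The coordinate and the choice of $q$.} Both are wrong as stated.
\begin{itemize}
\item In the variable $x$ or $y=x^2$, the relevant roots have valuation $0$: if $\nu(X)=-1/q$ with $q\ge2$, then $\nu(2X+1)=0$. So no edge of slope $1/q$ exists there. The slopes appear only after the shift $x=2X+1$, which is where Wahab's theorem applies.
\item Wahab's theorem also fixes $q$. A common factor has roots of valuation $-1/q$ with $q=2^e$, where the $e$th binary digits of both $m$ and $n$ are $1$. It is not $q=2^{\nu(n-m)}$: for $m=2$, $n=6$ your recipe gives $4$, while only $q=2$ is shared.
\item You must also exclude $q=1$. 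For odd $m$, the unique root of $Q_m$ of valuation $-1$ is $-1/2$, i.e.\ $x=0$.
\item You need $q\mid\deg g$, which follows from $\nu(g(0))\in\mathbb{Z}$.
\end{itemize}

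\textbf{The upper bound.} The bound $L_{I_{m,n}}(1/q)<3q$ is the main content, and you only state it as a goal. Your phrasing, counting lattice points on the line, is also not the edge length. The paper proves it as follows:
\begin{itemize}
\item It linearizes $Q_mQ_n=\sum_r\gamma_rQ_{m+n-2r}$ by Gaunt's formula, so that $a_k=\sum_r\gamma_rB(\ell_r,k)$ with explicit $2$-adic valuations.
\item It compares these with the coefficient point $(q(M+N),\,s(M)+s(N))$ inherited from $\operatorname{NP}_2(Q_mQ_n)$, where $m=qM+u$ and $n=qN+v$.
\item It shows that every endpoint degree $k$ comes from some $r\in\{\rho,\rho+q\}$, with $\rho$ fixed by $m,n,q$, and that $\ell_r-q<k\le\ell_r$.
\item Hence the two endpoints differ by at most $3q-1$.
\end{itemize}
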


To prove Theorem~\ref{thm:1-1}, define the shifted polynomials and the auxiliary polynomial by
\begin{align}
 Q_j(X)&=P_j(2X+1),\label{eq:1-3}\\
 I_{m,n}(X)&=\frac1X\int_0^X Q_m(t)Q_n(t)\,dt.\label{eq:1-4}
\end{align}
Here the integral is taken term by term. Since the resulting polynomial has zero constant term, $I_{m,n}\in\mathbb{Q}[X]$; moreover, $Q_j(0)=1$ gives $I_{m,n}(0)=1$.

The key step is to prove that, if $q=2^e\ge2$ and the $e$th binary digits of both $m$ and $n$ are $1$, then the edge of slope $1/q$ in the $2$-adic Newton polygon of $I_{m,n}$ has horizontal length less than $3q$. Section~3 establishes this estimate in four steps. We first use a known coefficient point to obtain an upper bound for the intercept of a lower supporting line of the Newton polygon, and then derive a lower bound. Combining the two bounds yields conditions on the abscissas of the endpoints of the edge of slope $1/q$; these conditions then bound its horizontal length.

Section~4 deduces the main theorem. If a common nonzero root existed, then $Q_m$ and $Q_n$ would have a common irreducible factor of degree at least $q$ in $\mathbb{Q}_2[X]$. The Legendre differential equation ensures that the cube of this factor divides $I_{m,n}$, forcing the corresponding edge to have horizontal length at least $3q$, contrary to the estimate above.

\section{Preliminaries}

\subsection{Valuations, the floor function, and binary identities}

For a nonzero rational number $a=2^g u/v$, where $u$ and $v$ are odd,
define $\nu_2(a)=g$, and set $\nu_2(0)=+\infty$. We write $\nu=\nu_2$
throughout. Let $\mathbb{Q}_2$ denote the field of $2$-adic numbers, and
use the same notation $\nu$ for the extension of the valuation to its
algebraic closure.

For a real number $x$, square brackets $[x]$ denote the greatest integer
not exceeding $x$, that is, the floor function:
\begin{equation}
 [x]=\max\{z\in\mathbb{Z}:z\le x\}.
 \label{eq:2-1}
\end{equation}
For a nonnegative integer $j$, let $s(j)$ be the sum of its binary digits
and let $f(j)=\nu(j!)$, with $s(0)=f(0)=0$. Binary digits are indexed
starting with the units digit as digit $0$. We use the following
standard identities:
\begin{align}
 \nu(ab)&=\nu(a)+\nu(b),
 &\nu(a+b)&\ge\min\{\nu(a),\nu(b)\};
 \label{eq:2-2}\\
 [x+y]&\ge[x]+[y],
 &[x+d]&=[x]+d\quad(d\in\mathbb{Z});
 \label{eq:2-3}\\
 f(j)&=\sum_{i\ge1}\left[\frac{j}{2^i}\right]=j-s(j);
 &&\label{eq:2-4}\\
 f(2j)&=f(2j+1)=j+f(j),
 &\nu\binom{2j}{j}&=s(j);
 \label{eq:2-5}\\
 s(j+1)&=s(j)+1-\nu(j+1).
 &&\label{eq:2-6}
\end{align}
If $q=2^e$ and $A=qT+\eta$, where $T\ge0$ and $0\le\eta<q$ are integers,
then
\begin{equation}
 s(A)=s(T)+s(\eta),\qquad
 \sum_{j>e}\left[\frac{A}{2^j}\right]=f(T).
 \label{eq:2-7}
\end{equation}
For integers $A\ge B\ge0$, we also have
\begin{equation}
 \nu\binom{A}{B}
 =\sum_{j\ge1}\left\{
 \left[\frac{A}{2^j}\right]
 -\left[\frac{B}{2^j}\right]
 -\left[\frac{A-B}{2^j}\right]\right\},
 \label{eq:2-8}
\end{equation}
where each difference of floor functions is a nonnegative integer.

\subsection{Newton polygons}

\begin{definition}\label{def:2-1}
Let $F(X)=\sum_{k=0}^d c_kX^k\in\mathbb{Q}_2[X]$, with $c_0c_d\ne0$.
The lower boundary of the convex hull of the coefficient points
\[
 (k,\nu(c_k)),\qquad c_k\ne0,
\]
is called the $2$-adic Newton polygon of $F$, denoted by
$\operatorname{NP}_2(F)$. Throughout, adjacent collinear segments are
regarded as a single edge, and the length of an edge means its horizontal
length. Let $L_F(\lambda)$ denote the length of the edge of slope
$\lambda$, with $L_F(\lambda)=0$ if there is no such edge.
\end{definition}

\begin{theorem}[Root valuation theorem]\label{thm:2-2}
If $\operatorname{NP}_2(F)$ has an edge of slope $\lambda$ and horizontal
length $L$, then $F$ has exactly $L$ roots in $\overline{\mathbb{Q}}_2$
of valuation $-\lambda$, counted with multiplicity.
\end{theorem}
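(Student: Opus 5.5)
The plan is the standard argument: factor $F$ completely over a splitting field $K\subset\overline{\mathbb{Q}}_2$ and compare the valuations of the coefficients, which are elementary symmetric functions of the roots, with the Newton polygon. Write $F(X)=c_d\prod_{i=1}^d(X-\alpha_i)$ with all $\alpha_i\ne0$ (because $c_0\ne0$). Order the roots so that $\nu(\alpha_1)\ge\nu(\alpha_2)\ge\dots\ge\nu(\alpha_d)$, and group them into blocks of equal valuation. Let the distinct valuations be $\mu_1>\mu_2>\dots>\mu_r$, with $N_t$ roots of valuation $\mu_t$. Then
\[
 c_{d-k}=(-1)^k c_d\,e_k(\alpha_1,\dots,\alpha_d).
\]
This is the form in which the claim will be proved: the Newton polygon consists of edges of slope $-\mu_t$ and horizontal length $N_t$, ordered left to right by increasing slope. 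Reading the statement with $\lambda=-\mu_t$ then gives the theorem. In particular, a slope that occurs among the edges corresponds to exactly one block, so $L_F(\lambda)$ counts the roots of valuation $-\lambda$.

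The key steps run as follows. First, set $M_k=N_1+\dots+N_k$ (these are counts of roots taken from the largest-valuation end) and consider the indices $d-M_k$. Second, the valuation of $e_j$ is estimated at $j=M_k$, the point where exactly the $M_k$ roots of \emph{smallest} valuation are used. Since $e_j$ is dominated by products of the $j$ smallest-valuation roots, we relabel so that $\nu(\alpha_1)\le\dots\le\nu(\alpha_d)$, that is, the $\mu$'s are increasing. Every term of $e_j$ has valuation at least $S_j:=\nu(\alpha_1)+\dots+\nu(\alpha_j)$, and the ultrametric inequality \eqref{eq:2-2} gives $\nu(e_j)\ge S_j$ for all $j$. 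Third, at a breakpoint $j=M_k$, where $\nu(\alpha_{j})<\nu(\alpha_{j+1})$, exactly one term, namely $\alpha_1\cdots\alpha_j$, attains the minimum $S_j$. Every other term replaces some root with one of strictly larger valuation, so equality holds: $\nu(e_j)=S_j$.

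Consequently, every coefficient point $(d-j,\nu(c_{d-j}))$ lies on or above the piecewise linear function determined by the points $(d-j,\nu(c_d)+S_j)$. That function is convex in the index $d-j$, because the increments $\nu(\alpha_j)$ are monotone. Moreover, its vertices at $j=M_k$, together with $j=0$ and $j=d$, are actual coefficient points. The lower convex hull is therefore exactly this function. Its edge between consecutive vertices has horizontal length $N_t$ and slope $-\mu_t$: moving right by one index removes one root of valuation $\mu_t$ from the product, which lowers the height by $\mu_t$. Distinct blocks give distinct slopes, so the merging convention of Definition~\ref{def:2-1} is consistent with this picture. Multiplicities take care of themselves, because the roots were listed with multiplicity.

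The main obstacle is the uniqueness-of-minimal-term step at the breakpoints, which requires care with the direction conventions. I will check that the vertex at horizontal index $d-j$ corresponds to the $j$ smallest-valuation roots. With the orientation fixed, the edges from left to right have slopes $-\mu$, where $\mu$ runs over the valuations in decreasing order. The slopes are then increasing, as they must be for a lower convex hull, and the edge of slope $\lambda$ has length equal to the number of roots with $\nu=-\lambda$.
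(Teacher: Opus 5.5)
Your proof is correct. It is the standard elementary-symmetric-function argument, which is essentially the proof of Milne's Proposition~7.44 that the paper cites rather than reproves (Milne orders coefficients by descending powers, so there the slope equals the root valuation instead of its negative). The only blemish is expository: once you relabel the roots by increasing valuation, $M_k=N_1+\cdots+N_k$ counts roots from the \emph{smallest}-valuation end, so the parenthetical ``from the largest-valuation end'' should be deleted. The breakpoint-uniqueness step and the identification of the lower convex hull then go through exactly as you wrote them.
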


See \cite[Proposition~7.44, pp.~125--126]{Milne}. The coefficients there
are ordered by descending powers; with the ascending-power convention
used here, the root valuation is the negative of the edge slope.

\begin{theorem}[Product theorem]\label{thm:2-3}
Let $F,G\in\mathbb{Q}_2[X]$, with $F(0)G(0)\ne0$. The edges of
$\operatorname{NP}_2(FG)$ are obtained by merging the edges of the two
factors in increasing order of slope, adding the horizontal lengths
of edges having the same slope. In particular,
\[
 L_{FG}(\lambda)=L_F(\lambda)+L_G(\lambda).
\]
\end{theorem}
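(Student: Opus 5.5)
The statement to prove is the Product theorem (Theorem~\ref{thm:2-3}). I would derive it from the Root valuation theorem (Theorem~\ref{thm:2-2}) together with the factorization of polynomials over $\overline{\mathbb{Q}}_2$.

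The first step is to show that the Newton polygon of $F$ is determined by two pieces of data: the valuation $\nu(c_0)$ and the multiset of valuations of the roots of $F$. Since $F(0)\neq 0$, we can write
\[
 F(X)=c_0\prod_{i=1}^{d}\bigl(1-X/\alpha_i\bigr)
\]
with all $\alpha_i\neq0$ in $\overline{\mathbb{Q}}_2$. By Theorem~\ref{thm:2-2}, for each slope $\lambda$ the number of $\alpha_i$ with $\nu(\alpha_i)=-\lambda$ equals $L_F(\lambda)$. Since $F$ has exactly $d$ roots, the lengths $L_F(\lambda)$ sum to $d$, the full horizontal extent of the polygon, so every root is accounted for by some edge.

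Conversely, the polygon is recovered from this data. It starts at $(0,\nu(c_0))$, and its edges are arranged in increasing order of slope by convexity. The edge of slope $\lambda$ has horizontal length equal to the number of roots of valuation $-\lambda$. Hence $\operatorname{NP}_2(F)$ is the unique convex piecewise-linear path from $(0,\nu(c_0))$ whose slope multiset is $\{-\nu(\alpha_i)\}$.

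Now apply this to $FG$. Its constant term is $F(0)G(0)$, so by \eqref{eq:2-2} it has valuation $\nu(F(0))+\nu(G(0))$, and it is nonzero. The roots of $FG$, with multiplicity, are the union of the roots of $F$ and of $G$. Therefore the number of roots of $FG$ of valuation $-\lambda$ is the sum of the corresponding counts for $F$ and $G$. Applying Theorem~\ref{thm:2-2} to $FG$ (its leading coefficient is nonzero as a product of nonzero leading coefficients) gives
\[
 L_{FG}(\lambda)=L_F(\lambda)+L_G(\lambda).
\]
The description of $\operatorname{NP}_2(FG)$ as the edges of the two factors merged in increasing order of slope, with lengths of equal slopes added, then follows from the reconstruction in the first step.

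The only delicate point is the converse direction of the first step: that the polygon is determined by the root valuations. I would justify it by noting that a convex path is determined by its initial point together with the horizontal length assigned to each slope, since convexity forces the edges to appear in increasing order of slope. If one prefers to avoid this, the proof can be shortened: the displayed identity for $L_{FG}$ follows directly from root counting, and it already determines all edges of $\operatorname{NP}_2(FG)$ once convexity is used to order them.
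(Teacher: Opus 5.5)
Your proof is correct and follows essentially the same route as the paper: root multiplicities over $\overline{\mathbb{Q}}_2$ add under multiplication, Theorem~\ref{thm:2-2} turns this into additivity of edge lengths, and the starting point $\nu(F(0))+\nu(G(0))$ together with convexity determines the merged polygon. Your remark that the edge lengths sum to $\deg F$ is a useful addition: it shows every root is counted by some edge, so $L_F(\lambda)=0$ exactly when no root has valuation $-\lambda$, a step the paper leaves implicit.
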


\begin{proof}
By Theorem~\ref{thm:2-2}, the length of the edge of slope $\lambda$
equals the total multiplicity of the roots of valuation $-\lambda$.
The multiplicity of each root in the product is the sum of its
multiplicities in the two factors, which proves the formula for the
edge lengths. The starting point is determined by
\[
 \nu\bigl(F(0)G(0)\bigr)=\nu(F(0))+\nu(G(0)),
\]
giving the stated description of the merging of edges in order of slope.
\end{proof}

A valuation on a complete discretely valued field extends uniquely to
each finite extension \cite[Theorem~7.38 and Corollary~7.40,
pp.~123--124]{Milne}. Thus all conjugate roots of an irreducible
polynomial over $\mathbb{Q}_2$ with nonzero constant term have the same
valuation. By Theorem~\ref{thm:2-2}, its Newton polygon has only one edge.

Fix a slope $\lambda$. By Definition~\ref{def:2-1}, the lower supporting
line in this direction is
\begin{equation}
 y=\lambda x+\min_{c_k\ne0}\{\nu(c_k)-\lambda k\}.
 \label{eq:2-9}
\end{equation}
If this direction corresponds to an edge, both endpoints attain the
minimum in this expression.

By Rodrigues' formula, or by setting the Jacobi parameters
$\alpha=\beta=0$ and substituting $x=2X+1$ in
\cite[Eq.~(18.5.7)]{DLMF}, we obtain
\begin{equation}
 Q_n(X)=\sum_{k=0}^n\binom{n}{k}\binom{n+k}{k}X^k.
 \label{eq:2-10}
\end{equation}
In particular, $Q_n\in\mathbb{Z}[X]$ and $Q_n(0)=1$. Its Newton polygon
is determined by the following theorem.

\begin{theorem}[Wahab]\label{thm:2-4}
Let $n\ge1$, and write $n=q_1+\cdots+q_s$, where
$q_1>\cdots>q_s$ are powers of $2$. From left to right, the edges of
$\operatorname{NP}_2(Q_n)$ have slopes $1/q_i$, horizontal lengths
$q_i$, and vertical increments $1$.
\end{theorem}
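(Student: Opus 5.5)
The plan is to determine the exact valuations at $n$ candidate vertices. Then I will show that every other coefficient point lies strictly above the piecewise linear function through them. Write $k_0=0$ and $k_i=q_1+\cdots+q_i$ for $1\le i\le s$, so $k_s=n$. The claim is equivalent to two facts. First, $\nu(c_{k_i})=i$ for $0\le i\le s$, where $c_k=\binom{n}{k}\binom{n+k}{k}$. Second, every coefficient point lies on or above the polygonal line $\varphi$ through the points $(k_i,i)$. On $[k_{i-1},k_i]$ this line is $\varphi(k)=(i-1)+(k-k_{i-1})/q_i$. Its slopes $1/q_1<\cdots<1/q_s$ are increasing, so it is convex. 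Every $c_k$ is a positive integer, so all $n+1$ coefficient points are present.

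\textbf{Step 1: a formula for $\nu(c_k)$.} I rewrite
\[
 c_k=\frac{(n+k)!}{k!\,k!\,(n-k)!}=\binom{2k}{k}\binom{n+k}{2k}.
\]
By \eqref{eq:2-5}, $\nu\binom{2k}{k}=s(k)$, and the second factor is an integer. This gives the lower bound $\nu(c_k)\ge s(k)$. For exact values I use \eqref{eq:2-4}:
\[
 \nu(c_k)=2s(k)+s(n-k)-s(n+k).
\]

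\textbf{Step 2: the vertices.} Take $k=k_i$. Its binary digits are exactly the top $i$ binary digits of $n$, so $s(k_i)=i$. The difference $n-k_i=q_{i+1}+\cdots+q_s$ has the remaining $s-i$ digits. Adding $k_i$ to $n$ doubles the top $i$ powers and leaves the others alone: $n+k_i=2q_1+\cdots+2q_i+q_{i+1}+\cdots+q_s$. No collisions occur, because the $2q_j$ for $j\le i$ are distinct, and each exceeds $q_i>q_{i+1}$. Hence $s(n+k_i)=s$, and therefore $\nu(c_{k_i})=2i+(s-i)-s=i$.

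\textbf{Step 3: the lower bound.} Let $k_{i-1}<k\le k_i$. Since $\varphi(k)\le i$ on this interval, it suffices to show $s(k)\ge i$. By Step 1 this gives $\nu(c_k)\ge i\ge\varphi(k)$. Moreover the inequality is strict when $k<k_i$, because then $\varphi(k)<i$. Thus the only points on $\varphi$ are the $(k_i,i)$, and the edges are exactly as stated.

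To show $s(k)\ge i$, compare the binary expansions of $k\le n$ and $n$, and look at the highest digit where they differ. There $n$ must have a $1$ and $k$ a $0$, since otherwise $k>n$. If that digit were the position of some $q_j$ with $j\le i-1$, then $k<k_j\le k_{i-1}$, a contradiction. Hence $k$ contains the digits of $q_1,\dots,q_{i-1}$, which contributes $i-1$ to $s(k)$. The remainder $k-k_{i-1}>0$ occupies lower digits and contributes at least $1$. So $s(k)\ge i$.

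\textbf{Conclusion.} Combining the steps, the Newton polygon of $Q_n$ is exactly $\varphi$. Its edges have slopes $1/q_i$, horizontal lengths $q_i$, and vertical increments $1$.

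I expect the main obstacle to be Step 3, namely identifying a lower bound strong enough to work for every $k$. The observation that $\nu(c_k)\ge s(k)$, together with the digit-agreement argument, should resolve it without any carry analysis beyond Step 2.
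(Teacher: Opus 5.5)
Your proof is correct. It differs from the paper only in that the paper gives no proof of this statement: it imports the result from Wahab (Theorem~3.1 there, restated as Theorem~7.3 in Cullinan--Hajir).

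Your argument is self-contained and uses only the digit-sum identities \eqref{eq:2-4}--\eqref{eq:2-5} that the paper already lists. There are two key ingredients:
\begin{itemize}
\item The exact formula $\nu(c_k)=2s(k)+s(n-k)-s(n+k)$ pins down the vertices $(k_i,i)$, because the addition $n+k_i$ is carry-free.
\item The cheap bound $\nu(c_k)\ge s(k)$ comes from the factorization $c_k=\binom{2k}{k}\binom{n+k}{2k}$, which is the same factorization the paper uses in \eqref{eq:2-17}. Your prefix argument then gives $s(k)\ge i$ on $(k_{i-1},k_i]$, so every non-vertex point lies strictly above the convex function $\varphi$.
\end{itemize}

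Two small points need tidying:
\begin{itemize}
\item In Step~3, the case $k=n$ has no highest differing digit. There, however, $i=s$ and $s(k)=s(n)=s$ directly.
\item In the conclusion, make the hull argument explicit. The region above the convex function $\varphi$ on $[0,n]$ is convex and contains every coefficient point, so it contains the whole hull. The vertices $(k_i,i)$ lie in the hull, so the lower boundary is exactly $\varphi$. Since the slopes $1/q_1<\cdots<1/q_s$ strictly increase, no segments merge, and the $s$ segments are distinct edges.
\end{itemize}

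What your version buys is that the paper's only external input about $Q_n$ becomes an elementary half-page lemma, built from tools the paper already uses in Lemma~\ref{lem:2-5}. What the citation buys is brevity.
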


This is \cite[Theorem~3.1]{Wahab}; see also its restatement in
\cite[Theorem~7.3]{CullinanHajir}.

\subsection{Gaunt's formula and the coefficients of the integral}

We use Gaunt's formula to expand $Q_mQ_n$ as a linear combination of
shifted Legendre polynomials, then integrate term by term to obtain
the coefficients of $I_{m,n}$.

Fix $0\le m<n$. Gaunt's formula
\cite[Eqs.~(34.3.5) and (34.3.19)]{DLMF} gives
\begin{equation}
 Q_m(X)Q_n(X)=\sum_{r=0}^m\gamma_rQ_{\ell_r}(X),
 \qquad\ell_r=m+n-2r,
 \label{eq:2-11}
\end{equation}
where
\begin{equation}
 \gamma_r=
 \frac{2\ell_r+1}{2m+2n-2r+1}
 \frac{\displaystyle
 \binom{2m-2r}{m-r}\binom{2n-2r}{n-r}\binom{2r}{r}}
 {\displaystyle\binom{2m+2n-2r}{m+n-r}}.
 \label{eq:2-12}
\end{equation}
To express the coefficients of $I_{m,n}$, define
\begin{equation}
 B(\ell,k)=\frac{1}{k+1}\binom{\ell}{k}\binom{\ell+k}{k}
 \quad(0\le k\le\ell),\qquad
 \operatorname{Cat}_k=\frac{1}{k+1}\binom{2k}{k},
 \label{eq:2-13}
\end{equation}
where $\operatorname{Cat}_k$ is the $k$th Catalan number. Write
\begin{equation}
 I_{m,n}(X)=\sum_{k=0}^{m+n}a_kX^k.
 \label{eq:2-14}
\end{equation}

\begin{lemma}\label{lem:2-5}
The above coefficients satisfy
\begin{equation}
 a_k=\sum_{\substack{0\le r\le m\\k\le\ell_r}}
 \gamma_rB(\ell_r,k).
 \label{eq:2-15}
\end{equation}
Moreover,
\begin{align}
 \nu(\gamma_r)
 &=f(m+n-r)-f(m-r)-f(n-r)-f(r),
 \label{eq:2-16}\\
 B(\ell,k)&=\operatorname{Cat}_k\binom{\ell+k}{2k},
 \label{eq:2-17}\\
 \nu(\operatorname{Cat}_k)&=s(k+1)-1.
 \label{eq:2-18}
\end{align}
\end{lemma}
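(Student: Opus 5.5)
The lemma has four parts, and I will prove each directly from the formulas already stated.

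\emph{Formula \eqref{eq:2-15}.} Substitute Gaunt's expansion \eqref{eq:2-11} into the definition \eqref{eq:1-4}, using \eqref{eq:2-10} for each $Q_{\ell_r}$. Integrating $X^k$ term by term from $0$ to $X$ gives $X^{k+1}/(k+1)$, and dividing by $X$ leaves $X^k/(k+1)$. Hence the coefficient of $X^k$ in $\frac1X\int_0^X Q_\ell$ is
\[
\frac{1}{k+1}\binom{\ell}{k}\binom{\ell+k}{k}=B(\ell,k)
\]
for $k\le\ell$, and it is zero for $k>\ell$. Summing over $r$ with weights $\gamma_r$ yields \eqref{eq:2-15}.

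\emph{Identity \eqref{eq:2-17}.} Expand both sides in factorials:
\[
\binom{\ell}{k}\binom{\ell+k}{k}=\frac{(\ell+k)!}{k!\,k!\,(\ell-k)!}=\binom{\ell+k}{2k}\binom{2k}{k}.
\]
Dividing by $k+1$ gives $B(\ell,k)=\operatorname{Cat}_k\binom{\ell+k}{2k}$.

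\emph{Valuation \eqref{eq:2-18}.} By \eqref{eq:2-5}, $\nu\binom{2k}{k}=s(k)$. By \eqref{eq:2-6}, $\nu(k+1)=s(k)+1-s(k+1)$. Subtracting, $\nu(\operatorname{Cat}_k)=s(k)-\nu(k+1)=s(k+1)-1$.

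\emph{Valuation \eqref{eq:2-16}.} This is the only step needing real bookkeeping. Note that $2\ell_r+1=2m+2n-4r+1$ is odd, and so is $2m+2n-2r+1$; the prefactor therefore has valuation $0$. For each central binomial coefficient, \eqref{eq:2-5} gives $\nu\binom{2j}{j}=s(j)$, so
\[
\nu(\gamma_r)=s(m-r)+s(n-r)+s(r)-s(m+n-r).
\]
Now use $f(j)=j-s(j)$ from \eqref{eq:2-4}. The linear terms cancel, since $(m+n-r)-(m-r)-(n-r)-r=0$. Hence
\[
f(m+n-r)-f(m-r)-f(n-r)-f(r)=s(m-r)+s(n-r)+s(r)-s(m+n-r),
\]
which is exactly the expression obtained for $\nu(\gamma_r)$.

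The main point requiring care is that the two odd factors in the prefactor genuinely have valuation zero, so they do not disturb the $2$-adic count. Also, the validity of Gaunt's formula in the shifted variable follows from the substitution $x=2X+1$, which is taken as given from the earlier discussion. All remaining steps are mechanical.
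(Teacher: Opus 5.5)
Your proposal is correct and follows essentially the same route as the paper. You integrate Gaunt's expansion term by term for \eqref{eq:2-15}, use the oddness of the prefactor with $\nu\binom{2j}{j}=s(j)$ and $f(j)=j-s(j)$ for \eqref{eq:2-16}, rearrange factorials for \eqref{eq:2-17}, and apply \eqref{eq:2-5}--\eqref{eq:2-6} for \eqref{eq:2-18}; the only difference is that there you quote $\nu\binom{2k}{k}=s(k)$ directly rather than passing through $f(2k)-2f(k)=k-f(k)$.
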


\begin{proof}
We first compute the coefficients of $I_{m,n}$. Substitute
\eqref{eq:2-10} into \eqref{eq:2-11} and integrate as in
\eqref{eq:1-4}:
\begin{align*}
 I_{m,n}(X)
 &=\sum_{r=0}^m\gamma_r\frac{1}{X}\int_0^XQ_{\ell_r}(t)\,dt\\
 &=\sum_{r=0}^m\gamma_r\sum_{k=0}^{\ell_r}
 \frac{1}{k+1}\binom{\ell_r}{k}\binom{\ell_r+k}{k}X^k\\
 &=\sum_{k=0}^{m+n}\left(
 \sum_{\substack{0\le r\le m\\k\le\ell_r}}
 \gamma_rB(\ell_r,k)\right)X^k.
\end{align*}
Comparing coefficients gives \eqref{eq:2-15}.

Next, we compute the valuation of $\gamma_r$. The numerator and
denominator of the first fraction in \eqref{eq:2-12} are both odd.
By \eqref{eq:2-5} and \eqref{eq:2-4},
\begin{align*}
 \nu(\gamma_r)
 &=s(m-r)+s(n-r)+s(r)-s(m+n-r)\\
 &=(m-r-f(m-r))+(n-r-f(n-r))\\
 &\qquad+(r-f(r))-(m+n-r-f(m+n-r))\\
 &=f(m+n-r)-f(m-r)-f(n-r)-f(r).
\end{align*}
Finally, we factor $B(\ell,k)$ as the product of a Catalan number and
a binomial coefficient. By \eqref{eq:2-13},
\begin{align*}
 B(\ell,k)
 &=\frac{(\ell+k)!}{(k+1)(k!)^2(\ell-k)!}\\
 &=\frac{(2k)!}{(k+1)(k!)^2}
 \frac{(\ell+k)!}{(2k)!(\ell-k)!}
 =\operatorname{Cat}_k\binom{\ell+k}{2k}.
\end{align*}
Using \eqref{eq:2-5} and \eqref{eq:2-6} gives
\begin{align*}
 \nu(\operatorname{Cat}_k)
 &=f(2k)-2f(k)-\nu(k+1)\\
 &=k-f(k)-\nu(k+1)\\
 &=s(k)-\nu(k+1)=s(k+1)-1.
\end{align*}
\end{proof}

\section{The edge-length estimate}

\begin{theorem}\label{thm:3-1}
Let $0\le m<n$ and $q=2^e\ge2$, and suppose that the $e$th binary digits of both $m$ and $n$ are $1$. Then
\begin{equation}
L_{I_{m,n}}(1/q)<3q.
\label{eq:3-1}
\end{equation}
\end{theorem}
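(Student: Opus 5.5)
The plan follows the four-step outline announced in the introduction. Fix $\lambda=1/q$ and set
\[
\mu=\min_{a_k\ne0}\{\nu(a_k)-k/q\},
\]
so the lower supporting line of slope $1/q$ is $y=k/q+\mu$ by \eqref{eq:2-9}. If the edge of slope $1/q$ has endpoints at abscissas $k_0<k_1$, both attain $\mu$. The goal is to show $k_1-k_0<3q$.

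\textbf{Step 1: upper bound for $\mu$.} Evaluate one explicit coefficient whose valuation can be computed exactly. The natural candidate is the top coefficient $a_{m+n}$, which receives only the $r=0$ term:
\[
a_{m+n}=\gamma_0B(m+n,m+n)=\gamma_0\operatorname{Cat}_{m+n}.
\]
By \eqref{eq:2-16} and \eqref{eq:2-18} its valuation is $f(m+n)-f(m)-f(n)+s(m+n+1)-1$. An alternative is a coefficient near the middle, chosen using the binary structure at digit $e$. This gives
\[
\mu\le \nu(a_{k^*})-k^*/q
\]
for a suitable $k^*$. I would choose $k^*$ to be adapted to $q$, for example a multiple of $q$ minus $1$, so that $\nu(\operatorname{Cat}_{k^*})=s(k^*+1)-1$ is small.

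\textbf{Step 2: lower bound for $\nu(a_k)-k/q$, termwise.} From \eqref{eq:2-15} and \eqref{eq:2-2},
\[
\nu(a_k)\ge\min_r\bigl\{\nu(\gamma_r)+\nu(\operatorname{Cat}_k)+\nu\tbinom{\ell_r+k}{2k}\bigr\}.
\]
Expand each piece with \eqref{eq:2-8} and \eqref{eq:2-4} as sums of floor-function differences over $j\ge1$. Then split the sum into the range $j\le e$ and the range $j>e$. Writing $k+1=qT+\eta$, identity \eqref{eq:2-7} converts the $j>e$ part into quantities involving $f(T)$ and $s(T)$. The $j\le e$ part, using $[x+y]\ge[x]+[y]$, is bounded below by something like $k/q$ plus a correction depending on $\eta$ and on the digit-$e$ bits of $m,n,r$. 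The hypothesis that digit $e$ of both $m$ and $n$ is $1$ is exactly what should make the carries at position $e$ contribute an extra unit. The result is an inequality of the form
\[
\nu(a_k)-k/q\ge C(m,n,q)+\delta(k),
\]
where $C$ matches the Step 1 upper bound and $\delta(k)\ge0$ vanishes only when $k\bmod q$ lies in a restricted window and the digit of $T$ (or of $\ell_r+k$) satisfies a specific condition.

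\textbf{Step 3: location of the endpoints.} Comparing the two bounds, any abscissa attaining $\mu$ must satisfy $\delta(k)=0$. This forces $k_0$ and $k_1$ into specific residue classes mod $q$ and into a bounded range of the "block index" $T$. For instance, $k_0\ge(t-1)q$ and $k_1\le(t+2)q-1$ with strict inequality somewhere, which yields $k_1-k_0<3q$.

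\textbf{Main obstacle.} The hardest step is Step 2. The lower bound must be uniform over all $r$ in the Gaunt sum, and the binomial $\binom{\ell_r+k}{2k}$ interacts with $\nu(\gamma_r)$ through carries at digit $e$. The approach I would try first is to bound $\nu(\gamma_r)+\nu\binom{\ell_r+k}{2k}$ jointly: combine the floor sums termwise at each level $j$ using $[x+y]\ge[x]+[y]$, so that the $r$-dependence cancels except at level $j=e+1$. At that level, the condition on the $e$th digits of $m$ and $n$ controls the remaining term. If the termwise cancellation fails for some $r$, I would fall back to isolating the exceptional $r$ and showing that its contribution either has strictly larger valuation or cannot occur at the extreme abscissas.
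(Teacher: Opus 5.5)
Your proposal has the right overall shape, but it lacks the ideas that make the argument close, and the one concrete choice you make in Step 1 fails. The method works only if the upper bound $U\ge\mu$ and the termwise lower bound $L$ (valid for every $(r,k)$ up to nonnegative integer slack) satisfy $U-L<1$. At any $k$ attaining $\mu$, some Gaunt term gives $L\le\mu$, so $U$ must lie within $1$ of the true intercept.

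The top coefficient does not achieve this. For $q=2$, $m=3$, $n=7$ you get $\nu(a_{10})-10/2=3+2-5=0$, whereas $\nu(a_8)-8/2=-1$. Hence $U-L\ge1$, and none of the slack terms can be forced to vanish. A ``middle'' coefficient computed from \eqref{eq:2-15} does not help either: the ultrametric inequality bounds its valuation only from below, so you would have to rule out cancellation among the Gaunt terms.

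The missing idea is to bypass \eqref{eq:2-15} at this step. Since $(XI_{m,n})'=Q_mQ_n$, the numbers $(k+1)a_k$ are the coefficients of $Q_mQ_n$. Wahab's theorem and the product theorem show that $(q(M+N),s(M)+s(N))$ is a vertex of $\operatorname{NP}_2(Q_mQ_n)$, and $q(M+N)+1$ is odd. Therefore $\nu(a_{q(M+N)})-(M+N)=-f(M)-f(N)=:h_0$ exactly.

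Steps 2 and 3 are only sketched, and the strategy you suggest for Step 2 would discard what Step 3 needs. The bound that works is, with $r=qt+\rho$ and $C_r$ the $j\le e$ part of the floor sum for $\nu(\gamma_r)$,
\[
\begin{aligned}
\nu\bigl(\gamma_rB(\ell_r,k)\bigr)-\frac kq\ge{}&h_0-1+\frac1q+C_r+f(t)\\
&+\sum_{j>e}\left\{\left[\frac{m+n-r}{2^j}\right]-\left[\frac{k+1}{2^j}\right]\right\}+\nu\binom{\ell_r+k}{2k}.
\end{aligned}
\]
It follows from $[m/2^j]\ge[(m-r)/2^j]+[r/2^j]$ for $j>e$ together with $\nu(\operatorname{Cat}_k)=s(k+1)-1$. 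The digit hypothesis enters through the parity of $M+N$, which is needed for the floor differences to be nonnegative when $r=0$; it does not enter through an extra carry.

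If the $r$-dependence cancels, as you propose, you lose the localization. The vanishing of $C_r$ and $f(t)$ forces $r\in\{\rho,\rho+q\}$, where $\rho$ has a binary $1$ exactly where both $u$ and $v$ do. This gives $|\ell_r-\ell_{r'}|\le 2q$ between the two endpoints. The floor identity at level $2q$ (where the evenness of $M+N$ matters again), together with $\nu\binom{\ell_r+k}{2k}=0$, then gives $\ell_r-q<k\le\ell_r$.

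So the endpoints are constrained by their distance to $\ell_r$, not by a residue window for $k$ modulo $q$. The strictness matters: without the binomial coefficient excluding $k=\ell_r-q$, you would only get $|k-k'|\le 3q$. Your Step 3 inequalities $k_0\ge(t-1)q$ and $k_1\le(t+2)q-1$ are not derived from anything, and they do not tie $k$ to $\ell_r$.
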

Since this edge length is an integer multiple of $q$, we also have $L_{I_{m,n}}(1/q)\le2q$.

Throughout this section, fix $m,n,q$ satisfying the hypotheses of the theorem, and write
\begin{equation}
m=qM+u,\qquad n=qN+v,\qquad 0\le u,v<q,\qquad h_0=-f(M)-f(N).
\label{eq:3-2}
\end{equation}
Thus $M$ and $N$ are positive odd integers. The summation indices $r,k$ below are integers satisfying
\begin{equation}
0\le r\le m,\qquad 0\le k\le\ell_r=m+n-2r.
\label{eq:3-3}
\end{equation}
The proof has four steps. We first use a known coefficient point to obtain an upper bound for the intercept of a lower supporting line of the Newton polygon, and then establish a lower bound. Combining the two bounds gives conditions on the horizontal coordinates of the endpoints of the edge of slope $1/q$. These conditions then bound its horizontal length.

\subsection{The upper-bound inequality}

\begin{lemma}\label{lem:3-2}
We have
\begin{equation}
\nu\bigl(a_{q(M+N)}\bigr)-\frac{q(M+N)}{q}=h_0.
\label{eq:3-4}
\end{equation}
If $k$ is the degree of an endpoint of the edge of slope $1/q$ in $\operatorname{NP}_2(I_{m,n})$, then there is an $r$ such that
\begin{equation}
\nu\bigl(\gamma_r B(\ell_r,k)\bigr)-\frac{k}{q}\le h_0.
\label{eq:3-5}
\end{equation}
\end{lemma}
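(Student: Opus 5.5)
The plan is to get the first assertion \eqref{eq:3-4} from Newton polygons, not from Gaunt's formula. The second assertion \eqref{eq:3-5} should then follow formally from the supporting-line description \eqref{eq:2-9} together with Lemma~\ref{lem:2-5}.

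\textbf{First assertion.} Put $K=q(M+N)$. By \eqref{eq:1-4}, $a_K=c_K/(K+1)$, where $c_K$ is the coefficient of $X^K$ in $Q_mQ_n$. Since $q\ge2$, the integer $K+1$ is odd, so $\nu(a_K)=\nu(c_K)$.

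Next I read off $\operatorname{NP}_2(Q_m)$ from Theorem~\ref{thm:2-4}. Its edges, taken from left to right, have slopes $1/q_i$ and length $q_i$, where the $q_i$ are the binary digits of $m$ in decreasing order. The digits with $q_i\ge q$ sum to exactly $qM$. Hence $(qM,s(M))$ is a point of $\operatorname{NP}_2(Q_m)$: the height starts at $0$ and rises by $1$ on each of the $s(M)$ edges of slope at most $1/q$. Every edge to the right of this point has slope strictly greater than $1/q$. The same description holds for $Q_n$, with the point $(qN,s(N))$.

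By the product theorem (Theorem~\ref{thm:2-3}), $\operatorname{NP}_2(Q_mQ_n)$ is obtained by merging edges in order of slope. The edges of slope at most $1/q$ from both factors come first. Their total length is $qM+qN=K$ and their total rise is $s(M)+s(N)$. All later edges have slope greater than $1/q$.

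\textbf{Key point.} The point $(K,s(M)+s(N))$ is a vertex of $\operatorname{NP}_2(Q_mQ_n)$, or the right endpoint if $u=v=0$. A vertex of a Newton polygon is always an actual coefficient point, so $\nu(c_K)=s(M)+s(N)$. Using \eqref{eq:2-4},
\[
\nu(a_K)-\frac{K}{q}=s(M)+s(N)-M-N=-f(M)-f(N)=h_0.
\]
The one thing to verify is the vertex claim. It holds because the slopes immediately to the left of $K$ are at most $1/q$ and those to the right are strictly larger. For this, a careful statement of the merging in Theorem~\ref{thm:2-3} suffices.

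\textbf{Second assertion.} Let $k$ be the abscissa of an endpoint of the edge of slope $1/q$ in $\operatorname{NP}_2(I_{m,n})$. By \eqref{eq:2-9}, $k$ attains
\[
\min_{a_j\neq0}\{\nu(a_j)-j/q\}.
\]
Taking $j=K$ and using \eqref{eq:3-4}, this minimum is at most $h_0$. Therefore $\nu(a_k)-k/q\le h_0$.

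By \eqref{eq:2-15}, $a_k$ is a finite sum of the terms $\gamma_rB(\ell_r,k)$. By the ultrametric inequality \eqref{eq:2-2}, some term satisfies $\nu(\gamma_rB(\ell_r,k))\le\nu(a_k)$. This gives \eqref{eq:3-5}.

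I expect no real obstacle. The only delicate step is justifying that $(K,s(M)+s(N))$ is a genuine vertex of $\operatorname{NP}_2(Q_mQ_n)$, so that the valuation of the coefficient equals the polygon height exactly rather than merely being bounded below by it.
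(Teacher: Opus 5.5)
Your proposal is correct and follows the paper's route. You use Wahab's theorem and the product theorem to locate $(q(M+N),s(M)+s(N))$ as a vertex of $\operatorname{NP}_2(Q_mQ_n)$, divide by the odd number $q(M+N)+1$ to get \eqref{eq:3-4}, and then combine \eqref{eq:2-9}, \eqref{eq:2-15} and the ultrametric inequality to get \eqref{eq:3-5}. Your explicit check that this point is a genuine vertex, with slopes at most $1/q$ to its left and strictly greater to its right, spells out what the paper simply asserts as ``an actual coefficient point.''
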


\begin{proof}
To determine the coefficient in \eqref{eq:3-4}, we first find the corresponding vertex of the Newton polygon of $Q_mQ_n$. By Theorem~\ref{thm:2-4}, a $1$ in the $j$th binary position corresponds to an edge of length $2^j$ and slope $2^{-j}$, with vertical increment $1$. Write
\[
m=\sum_{j\ge0}\varepsilon_j2^j,\qquad\varepsilon_j\in\{0,1\}.
\]
The edges of slope at most $1/q$ satisfy
\[
2^{-j}\le2^{-e}\iff j\ge e,\qquad
\sum_{j\ge e}\varepsilon_j2^j=qM,\qquad
\sum_{j\ge e}\varepsilon_j=s(M).
\]
Since $Q_m(0)=1$, the Newton polygon starts at $(0,0)$, so the right endpoint of its edge of slope $1/q$ is $(qM,s(M))$. Similarly, the corresponding right endpoint for $Q_n$ is $(qN,s(N))$. By the multiplication theorem, Theorem~\ref{thm:2-3}, the corresponding right endpoint for $Q_mQ_n$ is
\[
\bigl(q(M+N),s(M)+s(N)\bigr).
\]
This is an actual coefficient point. By \eqref{eq:1-4} and \eqref{eq:2-14},
\[
Q_mQ_n=(XI_{m,n})'=\sum_{k=0}^{m+n}(k+1)a_kX^k,
\]
and hence
\[
\nu\bigl((q(M+N)+1)a_{q(M+N)}\bigr)=s(M)+s(N).
\]
Since $q(M+N)+1$ is odd, it follows that
\[
\begin{aligned}
\nu\bigl(a_{q(M+N)}\bigr)-\frac{q(M+N)}q
&=s(M)+s(N)-M-N\\
&=-f(M)-f(N)=h_0.
\end{aligned}
\]

Now let $k$ be the degree of an endpoint of the edge. Denote the intercept of the lower supporting line of slope $1/q$ by
\[
h=\min_{a_k\ne0}\left\{\nu(a_k)-\frac{k}{q}\right\}.
\]
By \eqref{eq:3-4}, $h\le h_0$. Substituting the coefficient expansion \eqref{eq:2-15} into the valuation inequality \eqref{eq:2-2}, and using the endpoint relation \eqref{eq:2-9}, gives
\[
\min_{\substack{0\le r\le m\\k\le\ell_r}}
\left\{\nu\bigl(\gamma_rB(\ell_r,k)\bigr)-\frac{k}{q}\right\}
\le\nu(a_k)-\frac{k}{q}=h\le h_0.
\]
One of the terms attains the minimum, proving \eqref{eq:3-5}.
\end{proof}

\subsection{The lower-bound inequality}

To estimate the left-hand side of \eqref{eq:3-5}, we treat $\gamma_r$ and $B(\ell_r,k)$ separately. Write
\begin{equation}
r=qt+\rho,\qquad k+1=qK+w,\qquad0\le\rho,w<q,
\label{eq:3-6}
\end{equation}
and define
\begin{equation}
C_r=\sum_{j=1}^e\left\{
\left[\frac{m+n-r}{2^j}\right]
-\left[\frac{m-r}{2^j}\right]
-\left[\frac{n-r}{2^j}\right]
-\left[\frac{r}{2^j}\right]\right\}.
\label{eq:3-7}
\end{equation}

\begin{lemma}\label{lem:3-3}
For every pair $(r,k)$ satisfying \eqref{eq:3-3},
\begin{equation}
\begin{aligned}
\nu\bigl(\gamma_rB(\ell_r,k)\bigr)-\frac{k}{q}
\ge{}&h_0-1+\frac1q+C_r+f(t)\\
&+\sum_{j>e}\left\{
\left[\frac{m+n-r}{2^j}\right]-\left[\frac{k+1}{2^j}\right]
\right\}\\
&+\nu\binom{\ell_r+k}{2k}.
\end{aligned}
\label{eq:3-8}
\end{equation}
On the right-hand side, $C_r$, $f(t)$, each difference of floor functions, and the valuation of the binomial coefficient are nonnegative integers.
\end{lemma}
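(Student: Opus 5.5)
The plan is to write both valuations as sums of floor-function differences, split each sum at $j=e$, and compare the high-index part with $h_0=-f(M)-f(N)$. By \eqref{eq:2-16} and \eqref{eq:2-4},
\[
\nu(\gamma_r)=\sum_{j\ge1}\left\{\left[\tfrac{m+n-r}{2^j}\right]-\left[\tfrac{m-r}{2^j}\right]-\left[\tfrac{n-r}{2^j}\right]-\left[\tfrac{r}{2^j}\right]\right\}.
\]
The terms with $j\le e$ sum to $C_r$ by \eqref{eq:3-7}. By \eqref{eq:2-17} and \eqref{eq:2-18}, $\nu(B(\ell_r,k))=s(k+1)-1+\nu\binom{\ell_r+k}{2k}$. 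The binomial term already appears on the right of \eqref{eq:3-8}, so two estimates remain: one for the $j>e$ part of $\nu(\gamma_r)$, and one for $s(k+1)-1-k/q$.

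\emph{The Catalan part.} Write $k+1=qK+w$ as in \eqref{eq:3-6}. Then \eqref{eq:2-7} gives $s(k+1)=s(K)+s(w)$ and $\sum_{j>e}[(k+1)/2^j]=f(K)=K-s(K)$. Hence
\[
s(k+1)-1-\frac kq=-\sum_{j>e}\left[\frac{k+1}{2^j}\right]+s(w)-\frac wq-1+\frac1q .
\]
It remains to check $s(w)\ge w/q$. If $w=0$ this is an equality. If $0<w<q$, then $s(w)\ge1>w/q$. This yields the terms $-1+\frac1q-\sum_{j>e}[(k+1)/2^j]$ in \eqref{eq:3-8}.

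\emph{The $j>e$ part of $\nu(\gamma_r)$.} Write $r=qt+\rho$. By \eqref{eq:2-7}, $\sum_{j>e}[r/2^j]=f(t)$. Since $r\le m$ we have $t\le M$, and $m-r=q(M-t)+(u-\rho)$ with $u-\rho<q$. So for $j>e$,
\[
\left[\frac{m-r}{2^j}\right]\le\left[\frac{M-t}{2^{j-e}}\right],
\]
and summing gives $\sum_{j>e}[(m-r)/2^j]\le f(M-t)$. In the same way, $\sum_{j>e}[(n-r)/2^j]\le f(N-t)$. Now \eqref{eq:2-8} gives $\nu\binom{a+b}{a}\ge0$, which is superadditivity: $f(M-t)+f(t)\le f(M)$ and $f(N-t)+f(t)\le f(N)$. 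Therefore the $j>e$ part is at least
\[
\sum_{j>e}\left[\frac{m+n-r}{2^j}\right]-f(M)-f(N)+f(t)=\sum_{j>e}\left[\frac{m+n-r}{2^j}\right]+h_0+f(t).
\]
Adding the three pieces gives exactly \eqref{eq:3-8}.

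\emph{Nonnegativity.} $C_r\ge0$ follows from \eqref{eq:2-3} applied to $m+n-r=(m-r)+(n-r)+r$. Also $f(t)\ge0$, and the binomial valuation is a nonnegative integer. For the differences $[(m+n-r)/2^j]-[(k+1)/2^j]$ with $j>e$, it suffices that $k+1\le m+n-r$. This holds whenever $r\ge1$, because $k\le m+n-2r$.

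\emph{The main obstacle} is the boundary case $r=0$, $k=m+n$. There $k+1=m+n+1$ exceeds $m+n-r$, and a difference is negative precisely when $2^j\mid m+n+1$ for some $j>e$. I will rule this out using the digit hypothesis. Both $M$ and $N$ are odd, so $M+N$ is even, and
\[
m+n+1\equiv u+v+1\pmod{2q}.
\]
Since $1\le u+v+1\le 2q-1$, this is never divisible by $2q=2^{e+1}$. So no floor jumps at any $j>e$, and these differences are $0$. This is the only place where the hypothesis on the $e$th binary digits enters the lemma, and it is the step I would check most carefully.
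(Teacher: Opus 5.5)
Your proof is correct and follows the paper's argument essentially step for step. It uses the same splitting of $\nu(\gamma_r)$ at $j=e$, the same Catalan computation with $k+1=qK+w$ and $s(w)\ge w/q$, and the same observation that $m+n+1\not\equiv0\pmod{2q}$ in the case $r=0$; the only cosmetic difference is that for the $j>e$ part you pass through $f(M-t)$ and the superadditivity $f(M-t)+f(t)\le f(M)$, whereas the paper applies $\left[\frac{m}{2^j}\right]\ge\left[\frac{m-r}{2^j}\right]+\left[\frac{r}{2^j}\right]$ termwise and sums via \eqref{eq:2-7}, both giving the same bound $\sum_{j>e}\left[\frac{m-r}{2^j}\right]\le f(M)-f(t)$.
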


\begin{proof}
We first estimate $\nu(\gamma_r)$. Substituting the factorial valuation formula \eqref{eq:2-4} into \eqref{eq:2-16}, and splitting the sum at $j=e$, gives
\begin{equation}
\nu(\gamma_r)=C_r+\sum_{j>e}\left\{
\left[\frac{m+n-r}{2^j}\right]
-\left[\frac{m-r}{2^j}\right]
-\left[\frac{n-r}{2^j}\right]
-\left[\frac r{2^j}\right]\right\}.
\label{eq:3-9}
\end{equation}
Since $m=(m-r)+r$ and $n=(n-r)+r$, \eqref{eq:2-3} yields
\[
\begin{aligned}
\left[\frac m{2^j}\right]&\ge\left[\frac{m-r}{2^j}\right]+\left[\frac r{2^j}\right],\\
\left[\frac n{2^j}\right]&\ge\left[\frac{n-r}{2^j}\right]+\left[\frac r{2^j}\right].
\end{aligned}
\]
Rearranging, we obtain
\begin{equation}
\begin{aligned}
-\left[\frac{m-r}{2^j}\right]&\ge-\left[\frac m{2^j}\right]+\left[\frac r{2^j}\right],\\
-\left[\frac{n-r}{2^j}\right]&\ge-\left[\frac n{2^j}\right]+\left[\frac r{2^j}\right].
\end{aligned}
\label{eq:3-10}
\end{equation}
Substitute \eqref{eq:3-10} into \eqref{eq:3-9}, and evaluate the sums using \eqref{eq:2-7}:
\begin{equation}
\begin{aligned}
\nu(\gamma_r)
&\ge C_r+\sum_{j>e}\left\{
\left[\frac{m+n-r}{2^j}\right]-\left[\frac m{2^j}\right]
-\left[\frac n{2^j}\right]+\left[\frac r{2^j}\right]\right\}\\
&=C_r-f(M)-f(N)+f(t)+\sum_{j>e}\left[\frac{m+n-r}{2^j}\right]\\
&=h_0+C_r+f(t)+\sum_{j>e}\left[\frac{m+n-r}{2^j}\right].
\end{aligned}
\label{eq:3-11}
\end{equation}

We next estimate $\nu(B(\ell_r,k))-k/q$. By \eqref{eq:2-17} and \eqref{eq:2-18}, substituting $k=qK+w-1$ and using \eqref{eq:2-7} together with $s(K)-K=-f(K)$, we obtain
\[
\begin{aligned}
\nu(B(\ell_r,k))-\frac{k}{q}
&=s(k+1)-1-\frac{k}{q}+\nu\binom{\ell_r+k}{2k}\\
&=s(K)+s(w)-1-K-\frac{w}{q}+\frac1q+\nu\binom{\ell_r+k}{2k}\\
&=-f(K)-1+\frac1q+s(w)-\frac{w}{q}+\nu\binom{\ell_r+k}{2k}.
\end{aligned}
\]
Here
\[
s(w)-\frac wq=0\quad(w=0),\qquad
s(w)-\frac wq\ge1-\frac wq>0\quad(1\le w<q).
\]
Dropping this nonnegative term gives
\begin{equation}
\nu(B(\ell_r,k))-\frac kq
\ge-f(K)-1+\frac1q+\nu\binom{\ell_r+k}{2k}.
\label{eq:3-12}
\end{equation}

Add \eqref{eq:3-11} and \eqref{eq:3-12}. By \eqref{eq:2-7},
\[
f(K)=\sum_{j>e}\left[\frac{k+1}{2^j}\right],
\]
so $-f(K)$ combines with the sum in \eqref{eq:3-11} to give the differences of floor functions in \eqref{eq:3-8}. This proves the required lower bound.

It remains to verify the nonnegativity of the additional terms in this bound. Since $m+n-r=(m-r)+(n-r)+r$, \eqref{eq:2-3} gives
\[
\left[\frac{m+n-r}{2^j}\right]
\ge\left[\frac{m-r}{2^j}\right]+\left[\frac{n-r}{2^j}\right]+\left[\frac r{2^j}\right],
\]
and hence $C_r\ge0$. Since $t!$ and the binomial coefficient are positive integers,
\[
f(t)\ge0,\qquad\nu\binom{\ell_r+k}{2k}\ge0.
\]
Thus it only remains to prove, for $j>e$, that
\begin{equation}
\left[\frac{m+n-r}{2^j}\right]\ge\left[\frac{k+1}{2^j}\right].
\label{eq:3-13}
\end{equation}
If $r\ge1$, then
\[
k+1\le\ell_r+1=m+n-2r+1\le m+n-r,
\]
and \eqref{eq:3-13} follows from the monotonicity of the floor function.

If $r=0$, then $k+1\le m+n+1$. Since $M+N$ is even and $1\le u+v+1<2q$,
\[
m+n+1=q(M+N)+(u+v+1)\not\equiv0\pmod{2q}.
\]
For $j>e$, the divisibility $2q\mid2^j$ implies $2^j\nmid m+n+1$, and hence
\[
\left[\frac{m+n}{2^j}\right]
=\left[\frac{m+n+1}{2^j}\right]
\ge\left[\frac{k+1}{2^j}\right].
\]
Thus \eqref{eq:3-13} also holds when $r=0$.
\end{proof}

\subsection{Conditions obtained by combining the bounds}

Combining \eqref{eq:3-5} and \eqref{eq:3-8} gives a sum of nonnegative integers that is strictly less than $1$. Consequently, all these terms must vanish.

\begin{lemma}\label{lem:3-4}
Suppose that the integer pair $(r,k)$ lies in the range \eqref{eq:3-3} and satisfies \eqref{eq:3-5}. Then
\begin{align}
C_r&=0,\qquad t\in\{0,1\},
\label{eq:3-14}\\
\left[\frac{m+n-r}{2^j}\right]&=\left[\frac{k+1}{2^j}\right]\qquad(j>e),
\label{eq:3-15}\\
\nu\binom{\ell_r+k}{2k}&=0.
\label{eq:3-16}
\end{align}
In particular,
\begin{equation}
\left[\frac{m+n-r}{2q}\right]=\left[\frac{k+1}{2q}\right].
\label{eq:3-17}
\end{equation}
\end{lemma}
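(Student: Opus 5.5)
The plan is to chain \eqref{eq:3-5} with the lower bound \eqref{eq:3-8} of Lemma~\ref{lem:3-3}, which applies to the same pair $(r,k)$. This gives
\[
h_0\ge h_0-1+\frac1q+C_r+f(t)+\sum_{j>e}\left\{\left[\frac{m+n-r}{2^j}\right]-\left[\frac{k+1}{2^j}\right]\right\}+\nu\binom{\ell_r+k}{2k}.
\]
Subtracting $h_0$ shows that the sum $S$ of the remaining terms satisfies $S\le1-1/q<1$, because $q\ge2$.

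By Lemma~\ref{lem:3-3}, each term of $S$ is a nonnegative integer. These terms are $C_r$, $f(t)$, each floor difference for $j>e$, and the binomial valuation. Their sum is therefore a nonnegative integer strictly less than $1$, so it is $0$, and since every term is nonnegative, every term vanishes. This gives $C_r=0$, $\nu\binom{\ell_r+k}{2k}=0$, and $\bigl[(m+n-r)/2^j\bigr]=\bigl[(k+1)/2^j\bigr]$ for every $j>e$, which is \eqref{eq:3-15} and \eqref{eq:3-16}.

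For $t$, I use $f(t)=\nu(t!)=t-s(t)$ from \eqref{eq:2-4}. This vanishes exactly when $t!$ is odd, i.e.\ $t\in\{0,1\}$, since $t\ge2$ gives $2\mid t!$. That completes \eqref{eq:3-14}.

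Finally, \eqref{eq:3-17} is the case $j=e+1$ of \eqref{eq:3-15}, because $2^{e+1}=2q$. The argument has no real obstacle. The only point to check is that all terms are genuinely integers, so that the strict bound $<1$ forces each to be zero, and this integrality is already part of Lemma~\ref{lem:3-3}.
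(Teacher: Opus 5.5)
Your proposal is correct and follows the paper's proof essentially step for step. You combine \eqref{eq:3-5} with \eqref{eq:3-8} and cancel $h_0$ to get a sum of nonnegative integers bounded by $1-1/q<1$, so every term vanishes; then $f(t)=0$ forces $t\in\{0,1\}$, and taking $j=e+1$ in \eqref{eq:3-15} gives \eqref{eq:3-17}.
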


\begin{proof}
Combining \eqref{eq:3-5} and \eqref{eq:3-8}, cancelling $h_0$, and rearranging gives
\begin{equation}
\begin{aligned}
C_r+f(t)+\sum_{j>e}\left\{
\left[\frac{m+n-r}{2^j}\right]-\left[\frac{k+1}{2^j}\right]\right\}\\
+\nu\binom{\ell_r+k}{2k}\le1-\frac1q<1.
\end{aligned}
\label{eq:3-18}
\end{equation}
By Lemma~\ref{lem:3-3}, all the terms on the left are nonnegative integers. They therefore all vanish; in particular, $C_r=f(t)=0$. Since
\[
f(0)=f(1)=0,\qquad f(t)=\nu(t!)\ge1\quad(t\ge2),
\]
we have $t\in\{0,1\}$. Taking $j=e+1$ in \eqref{eq:3-15}, and using $2^{e+1}=2q$, yields \eqref{eq:3-17}.
\end{proof}

\subsection{The span of the degrees}

\begin{lemma}\label{lem:3-5}
Suppose that the integer pairs $(r,k)$ and $(r',k')$ both lie in the range \eqref{eq:3-3} and satisfy \eqref{eq:3-5}. Then $r$ and $r'$ have the same remainder $\rho$ upon division by $q$. This remainder is uniquely determined by $m,n,q$, and
\begin{equation}
r\in\{\rho,\rho+q\},\qquad |r-r'|\le q.
\label{eq:3-19}
\end{equation}
Each such pair also satisfies
\begin{equation}
\ell_r-q<k\le\ell_r,\qquad0\le\ell_r-k\le q-1.
\label{eq:3-20}
\end{equation}
Consequently,
\begin{equation}
|k-k'|\le3q-1<3q.
\label{eq:3-21}
\end{equation}
\end{lemma}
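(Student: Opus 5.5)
The plan is to read each condition of Lemma~\ref{lem:3-4} as a ``no carries'' statement in base $2$ (Kummer's theorem, in the form \eqref{eq:2-8}). First I pin down $r$ from $C_r=0$ and $t\in\{0,1\}$. Then I pin down $k$ from \eqref{eq:3-15} and \eqref{eq:3-16}.

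\emph{Step 1: the residue $\rho$.} Write $r=qt+\rho$. The quantity $C_r$ is the number of carries at positions $1,\dots,e$ in the addition $(m-r)+(n-r)+r=m+n-r$. These carries are determined by the residues mod $q$, namely $(u-\rho)\bmod q$, $(v-\rho)\bmod q$ and $\rho$.

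Suppose $\rho>u$. Then $(m-r)\bmod q=u-\rho+q$, and adding $\rho$ produces a carry into position $e$, which is counted by the $j=e$ term. Hence $C_r=0$ forces $\rho\le u$ and similarly $\rho\le v$. With no borrows, the three residues $u-\rho$, $v-\rho$, $\rho$ must have pairwise disjoint binary supports.
\begin{itemize}
\item Disjointness of $u-\rho$ and $\rho$ (with $\rho\le u$) means that the binary digits of $\rho$ form a subset of those of $u$; likewise for $v$.
\item Disjointness of $u-\rho$ and $v-\rho$ then forces $\rho$ to be exactly the digitwise AND of $u$ and $v$.
\end{itemize}
So $\rho$ depends only on $m,n,q$. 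Together with $t\in\{0,1\}$ this gives \eqref{eq:3-19}.

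\emph{Step 2: the bound \eqref{eq:3-20}.} Condition \eqref{eq:3-15}, together with $k+1\le m+n-r$ (established in the proof of Lemma~\ref{lem:3-3}), says that $k+1$ and $m+n-r$ agree in all binary digits above position $e$. Hence
\[
0\le (m+n-r)-(k+1)\le R,\qquad R:=(m+n-r)\bmod 2q.
\]
By Step 1, $u+v-\rho$ equals the digitwise OR of $u$ and $v$, which is less than $q$. Since $M+N$ is even,
\[
m+n-r=q(M+N-t)+(u+v-\rho),
\]
so $R=(u+v-\rho)+tq$. Therefore
\[
\ell_r-k=(m+n-r)-(k+1)-r+1\le R-(qt+\rho)+1=u+v-2\rho+1\le q.
\]
Note that $u+v-2\rho$ is the digitwise XOR of $u$ and $v$, which is at most $q-1$. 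The lower bound $\ell_r-k\ge0$ is just the range \eqref{eq:3-3}.

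\emph{Step 3: excluding $\ell_r-k=q$.} This is the delicate point. Equality forces the XOR of $u$ and $v$ to equal $q-1$ and $k+1=(m+n-r)-R\equiv0\pmod{2q}$. Then $k\equiv-1\pmod{2q}$, so digit $e-1$ of $k$ is $1$ (this uses $e\ge1$), and hence digit $e$ of $2k$ is $1$. Since $\ell_r+k=2k+q$, adding $q$ to $2k$ produces a carry. So $\binom{\ell_r+k}{2k}$ is even, contradicting \eqref{eq:3-16}. This gives $0\le\ell_r-k\le q-1$.

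\emph{Step 4: the span.} Finally,
\[
|k-k'|\le|\ell_r-\ell_{r'}|+(q-1)=2|r-r'|+q-1\le3q-1,
\]
which is \eqref{eq:3-21}.

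The main obstacles are the careful carry bookkeeping in Step 1, in particular showing that the $j=e$ term of $C_r$ rules out $\rho>u$, and the boundary case in Step 3.
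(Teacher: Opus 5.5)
Your proposal is correct and follows essentially the same route as the paper. $C_r=0$ and $t\in\{0,1\}$ force $\rho$ to be the digitwise AND of $u$ and $v$; \eqref{eq:3-17} then gives $\ell_r-k\le q$; and in the boundary case $\ell_r-k=q$ one gets $k+1\equiv0\pmod{2q}$, so $\binom{2k+q}{q}$ is even, contradicting \eqref{eq:3-16}. The paper reads off that last parity from the $j=e+1$ term of \eqref{eq:2-8}, which is the same as your carry argument.

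One small slip, which does not affect the conclusion. The inequality $k+1\le m+n-r$ is not established when $r=0$: the proof of Lemma~\ref{lem:3-3} only gives $k+1\le m+n+1$ there, and for example $m=2$, $n=3$, $(r,k)=(0,5)$ satisfies \eqref{eq:3-5}. So your lower bound $0\le(m+n-r)-(k+1)$ can fail. You only use the upper bound $(m+n-r)-(k+1)\le R$, however, and that follows from \eqref{eq:3-17} alone.
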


\begin{proof}
We first prove that $\rho$ is uniquely determined by $m,n,q$. Let $a,b$ be the remainders of $m-r,n-r$, respectively, upon division by $q$. For $1\le j\le e$, let $a_j,b_j,\rho_j$ be the remainders of $m-r,n-r,r$, respectively, upon division by $2^j$. Then
\[
\begin{aligned}
m-r&=2^j\left[\frac{m-r}{2^j}\right]+a_j,\\
n-r&=2^j\left[\frac{n-r}{2^j}\right]+b_j,\\
r&=2^j\left[\frac r{2^j}\right]+\rho_j.
\end{aligned}
\]
Adding these identities, dividing by $2^j$, and taking floors gives
\[
\begin{aligned}
\left[\frac{m+n-r}{2^j}\right]
={}&\left[\frac{m-r}{2^j}\right]
+\left[\frac{n-r}{2^j}\right]
+\left[\frac r{2^j}\right]\\
&+\left[\frac{a_j+b_j+\rho_j}{2^j}\right].
\end{aligned}
\]
Substituting into \eqref{eq:3-7} and using \eqref{eq:3-14}, we obtain
\begin{equation}
0=C_r=\sum_{j=1}^e\left[\frac{a_j+b_j+\rho_j}{2^j}\right]
\quad\Longrightarrow\quad
a_j+b_j+\rho_j<2^j\quad(1\le j\le e).
\label{eq:3-22}
\end{equation}

Since $2^j\mid q$, these remainders retain the lowest $j$ bits of $a,b,\rho$, respectively. If two of these numbers, say $a,b$, both had a $1$ in the $i$th binary position, where $0\le i<e$, then taking $j=i+1$ would give
\[
a_{i+1}\ge2^i,\qquad b_{i+1}\ge2^i,\qquad
2^{i+1}\le a_{i+1}+b_{i+1}+\rho_{i+1}<2^{i+1},
\]
a contradiction. Thus, in each binary position, at most one of $a,b,\rho$ has a $1$. Taking $j=e$ in \eqref{eq:3-22}, we obtain
\begin{equation}
a+b+\rho<q.
\label{eq:3-23}
\end{equation}
Since $m=(m-r)+r$ and $n=(n-r)+r$,
\[
\begin{aligned}
u&\equiv a+\rho\pmod q,\qquad0\le u,a+\rho<q
\quad\Longrightarrow\quad u=a+\rho,\\
v&\equiv b+\rho\pmod q,\qquad0\le v,b+\rho<q
\quad\Longrightarrow\quad v=b+\rho.
\end{aligned}
\]
That is,
\begin{equation}
u=a+\rho,\qquad v=b+\rho.
\label{eq:3-24}
\end{equation}
Both additions involve no carries, so every position in which $\rho$ has a $1$ also has a $1$ in both $u$ and $v$. Conversely, if $u$ and $v$ both had a $1$ in some position where $\rho$ had a $0$, then \eqref{eq:3-24} would imply that $a$ and $b$ both had a $1$ in that position, a contradiction. Therefore, $\rho$ has a $1$ in a binary position if and only if both $u$ and $v$ have a $1$ there. This determines $\rho$ uniquely. Moreover, \eqref{eq:3-14} gives
\[
r=qt+\rho,\qquad t\in\{0,1\}
\quad\Longrightarrow\quad r\in\{\rho,\rho+q\},\qquad |r-r'|\le q.
\]

We next prove that $k>\ell_r-q$. Equation~\eqref{eq:3-17} first gives a lower bound; we then use \eqref{eq:3-16} to rule out its boundary case. Set
\[
H=M+N-2t.
\]
Since $M,N$ are positive odd integers and $t\in\{0,1\}$, we have $H\in2\mathbb Z_{\ge0}$. Substituting \eqref{eq:3-24}, we obtain
\begin{align}
\ell_r&=q(M+N-2t)+u+v-2\rho\notag\\
&=qH+(a+\rho)+(b+\rho)-2\rho=qH+a+b,
\label{eq:3-25}\\
m+n-r&=q(M+N-t)+u+v-\rho\notag\\
&=qH+qt+(a+\rho)+(b+\rho)-\rho=qH+qt+a+b+\rho.
\label{eq:3-26}
\end{align}
By \eqref{eq:3-23},
\[
0\le qt+a+b+\rho<2q,
\]
so
\[
\begin{aligned}
\left[\frac{m+n-r}{2q}\right]
&=\left[\frac H2+\frac{qt+a+b+\rho}{2q}\right]\\
&=\frac H2+\left[\frac{qt+a+b+\rho}{2q}\right]=\frac H2.
\end{aligned}
\]
Substituting into \eqref{eq:3-17} gives
\begin{equation}
\left[\frac{k+1}{2q}\right]=\frac H2
\quad\Longrightarrow\quad k+1\ge qH.
\label{eq:3-27}
\end{equation}
Suppose that $k\le\ell_r-q$. Using \eqref{eq:3-25} and $a+b\le q-1$, we get
\[
qH\le k+1\le\ell_r-q+1=qH+a+b-q+1\le qH.
\]
Equality holds throughout, so
\begin{equation}
k=qH-1,\qquad a+b=q-1,\qquad\ell_r=qH+q-1.
\label{eq:3-28}
\end{equation}
Since $k+1=qH\ge1$, we have $H>0$. By \eqref{eq:3-28},
\begin{equation}
\binom{\ell_r+k}{2k}
=\binom{2qH+q-2}{2qH-2}
=\binom{2qH+q-2}{q}.
\label{eq:3-29}
\end{equation}
All the terms in the binomial valuation formula \eqref{eq:2-8} are nonnegative. Retaining the term with denominator $2q=2^{e+1}$ gives
\[
\begin{aligned}
\nu\binom{2qH+q-2}{q}
&\ge\left[\frac{2qH+q-2}{2q}\right]
-\left[\frac q{2q}\right]-\left[\frac{2qH-2}{2q}\right]\\
&=\left[H+\frac{q-2}{2q}\right]-\left[\frac12\right]-\left[H-\frac1q\right]\\
&=H-0-(H-1)=1.
\end{aligned}
\]
Here $q\ge2$ and $H>0$. This contradicts \eqref{eq:3-16}. Hence $k>\ell_r-q$, proving \eqref{eq:3-20}.

Finally, compare $k$ and $k'$. By \eqref{eq:3-20}, both $\ell_r-k$ and $\ell_{r'}-k'$ lie between $0$ and $q-1$, so
\[
\bigl|(\ell_r-k)-(\ell_{r'}-k')\bigr|\le q-1.
\]
Combining this with \eqref{eq:3-19}, we obtain
\[
\begin{aligned}
|k-k'|
&=\bigl|(\ell_r-\ell_{r'})-\bigl((\ell_r-k)-(\ell_{r'}-k')\bigr)\bigr|\\
&\le|\ell_r-\ell_{r'}|+\bigl|(\ell_r-k)-(\ell_{r'}-k')\bigr|\\
&\le2|r-r'|+(q-1)\\
&\le2q+(q-1)=3q-1<3q.
\end{aligned}
\]
\end{proof}

\begin{proof}[Proof of Theorem~\ref{thm:3-1}]
If there is no edge of slope $1/q$, the conclusion holds. Otherwise, denote the degrees of its endpoints by $k_-<k_+$. By Lemma~\ref{lem:3-2}, there exist $r_-,r_+$ such that both $(r_-,k_-)$ and $(r_+,k_+)$ satisfy \eqref{eq:3-5}. Lemma~\ref{lem:3-5} then yields
\[
L_{I_{m,n}}(1/q)=k_+-k_-\le3q-1<3q.
\]
Moreover, since $a_{k_-},a_{k_+}\in\mathbb Q^\times$,
\[
\frac{L_{I_{m,n}}(1/q)}q
=\nu(a_{k_+})-\nu(a_{k_-})\in\mathbb Z,\qquad
0<\frac{L_{I_{m,n}}(1/q)}q<3.
\]
Therefore $L_{I_{m,n}}(1/q)\le2q$.
\end{proof}

\section{Proof of the common root conjecture}

In this section, we prove that the existence of a common nonzero root would force the edge length in Theorem~\ref{thm:3-1} to be at least $3q$.

\begin{lemma}\label{lem:4-1}
Let $1\le m<n$. If $P_m$ and $P_n$ have a common nonzero complex root, then there exist $q=2^e\ge2$ and a monic irreducible polynomial $g\in\mathbb{Q}_2[X]$ such that the $e$th binary digits of both $m$ and $n$ are $1$, and
\begin{equation}\label{eq:4-1}
g\mid Q_m,\qquad g\mid Q_n,
\end{equation}
\begin{equation}\label{eq:4-2}
\nu(\alpha)=-\frac1q\quad(g(\alpha)=0),
\qquad \deg g\ge q.
\end{equation}
\end{lemma}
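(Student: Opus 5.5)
Suppose $P_m(x_0)=P_n(x_0)=0$ with $x_0\ne0$. Put $\alpha=(x_0-1)/2$, so $Q_m(\alpha)=Q_n(\alpha)=0$. Since $Q_j(0)=P_j(1)=1$, we have $\alpha\ne0$, and $\alpha$ is algebraic over $\mathbb{Q}$. Embed $\mathbb{Q}(\alpha)$ into $\overline{\mathbb{Q}}_2$ and let $g\in\mathbb{Q}_2[X]$ be the minimal polynomial of $\alpha$ over $\mathbb{Q}_2$. Then $g$ is monic and irreducible, and it divides both $Q_m$ and $Q_n$ in $\mathbb{Q}_2[X]$, which gives \eqref{eq:4-1}.

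Next I locate the valuation of $\alpha$. By Theorem~\ref{thm:2-4} and Theorem~\ref{thm:2-2}, every root of $Q_m$ has valuation $-2^{-j}$ for some $j$ with $\varepsilon_j(m)=1$. The same holds for $Q_n$ with the binary digits of $n$. Hence $\nu(\alpha)=-1/q$ with $q=2^e$, where the $e$th binary digit of both $m$ and $n$ is $1$. By the remark after Theorem~\ref{thm:2-3}, all conjugates of $\alpha$ have this valuation, so every root of $g$ does.

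The main point is to show $q\ge2$, that is, to exclude $e=0$. The digit $e=0$ is common exactly when $m,n$ are both odd. Then the roots of valuation $-1$ correspond to the single edge of length $1$ and slope $1$ in each Newton polygon. So each of $Q_m,Q_n$ has exactly one root of valuation $-1$, and that root lies in $\mathbb{Q}_2$. Since $P_j$ is odd for odd $j$, each $Q_j$ vanishes at $X=-1/2$, which has valuation $-1$. So that unique root is $-1/2$, and it corresponds to $x_0=0$, which is excluded. Hence $\alpha\ne-1/2$, and so $e\ge1$.

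Finally, $\deg g\ge q$. The polynomial $g$ has nonzero constant term, and by the remark after Theorem~\ref{thm:2-3} its Newton polygon is a single edge. Its slope is $1/q$ by Theorem~\ref{thm:2-2}, and its horizontal length is $\deg g$. The endpoints of this edge are coefficient points of $g\in\mathbb{Q}_2[X]$, so they have integer ordinates, and their vertical difference $\deg g/q$ is therefore an integer. Since it is also positive, $\deg g\ge q$. This gives \eqref{eq:4-2}.
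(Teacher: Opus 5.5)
Your proof is correct and follows the paper's argument. Wahab's theorem combined with the root valuation theorem gives $\nu(\alpha)=-1/q$ for a binary digit shared by $m$ and $n$; the unique root $-1/2$ of valuation $-1$ rules out $q=1$; and integrality of $\nu(g(0))=-\deg g/q$ gives $\deg g\ge q$. The differences are cosmetic: you take $g$ as the $\mathbb{Q}_2$-minimal polynomial of an embedded image of $\alpha$ and use injectivity of the embedding to get $\alpha\ne-1/2$, whereas the paper takes $g$ as an irreducible $\mathbb{Q}_2$-factor of the $\mathbb{Q}$-minimal polynomial $F$ and uses $F\ne X+\frac12$; and you read the integrality off the Newton polygon endpoints rather than from Vieta's formula.
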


\begin{proof}
Let $z\ne0$ be a common root, and let $F$ be the monic minimal polynomial of $(z-1)/2$ over $\mathbb{Q}$. Then
\begin{equation}\label{eq:4-3}
F\mid Q_m,\qquad F\mid Q_n,\qquad F\ne X+\frac12.
\end{equation}
Here $F\ne X+1/2$ because $(z-1)/2\ne-1/2$. Choose a monic irreducible factor $g$ of $F$ in $\mathbb{Q}_2[X]$. This gives~\eqref{eq:4-1}.

By uniqueness of the extension of the valuation, the conjugate roots of $g$ have the same valuation. Applying the root valuation theorem, Theorem~\ref{thm:2-2}, and Wahab's theorem, Theorem~\ref{thm:2-4}, to $g\mid Q_m,Q_n$, we obtain
\[
\nu(\alpha)=-1/q\quad(g(\alpha)=0),\qquad q=2^e,
\]
where the $e$th binary digits of both $m$ and $n$ are $1$.

We now show that $q\ne1$. Otherwise, $m$ and $n$ are both odd. By parity and Theorem~\ref{thm:2-4},
\[
Q_m(-1/2)=P_m(0)=0,\qquad
\nu(-1/2)=-1,\qquad L_{Q_m}(1)=1.
\]
By Theorem~\ref{thm:2-2}, $Q_m$ has exactly one root of valuation $-1$, counted with multiplicity, namely $-1/2$. Thus, using $g\mid F$ and the fact that $F$ is monic and irreducible, we obtain
\[
g=X+\frac12
\quad\Longrightarrow\quad F(-1/2)=0
\quad\Longrightarrow\quad F=X+\frac12,
\]
contrary to~\eqref{eq:4-3}. Therefore $q\ge2$.

Finally, we estimate $\deg g$. Put $d=\deg g$, and denote the roots of $g$ by $\alpha_1,\ldots,\alpha_d$. Since $g\mid Q_m$ and $Q_m(0)=1$, we have $g(0)\in\mathbb{Q}_2^{\times}$. Vieta's formula gives
\[
g(0)=(-1)^d\prod_{i=1}^d\alpha_i,
\]
and hence
\begin{equation}\label{eq:4-4}
\nu(g(0))=\sum_{i=1}^d\nu(\alpha_i)=-\frac dq\in\mathbb{Z}.
\end{equation}
Thus $q\mid d$, and $d\ge1$, so $d\ge q$.
\end{proof}

\begin{lemma}\label{lem:4-2}
Let $0\le m<n$, and let $g\in\mathbb{Q}_2[X]$ be a monic irreducible polynomial dividing both $Q_m$ and $Q_n$. Then
\begin{equation}\label{eq:4-5}
g^3\mid I_{m,n}.
\end{equation}
\end{lemma}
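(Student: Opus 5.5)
The plan is to express $I_{m,n}$ in closed form through the Wronskian of $Q_m$ and $Q_n$, using the Legendre differential equation, and then count root multiplicities. Under $x=2X+1$ the Legendre equation $\bigl((1-x^2)P_j'\bigr)'+j(j+1)P_j=0$ becomes
\[
 \bigl(p\,Q_j'\bigr)'=\lambda_jQ_j,\qquad p(X)=X(X+1),\quad \lambda_j=j(j+1).
\]
Put $W=Q_mQ_n'-Q_nQ_m'$. Multiply the equation for $n$ by $Q_m$ and the one for $m$ by $Q_n$, then subtract. The cross terms $pQ_m'Q_n'$ cancel, and this gives the Christoffel-type identity
\[
 \bigl(pW\bigr)'=(\lambda_n-\lambda_m)\,Q_mQ_n .
\]
Since $m<n$, we have $c:=\lambda_n-\lambda_m\neq0$. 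Integrating from $0$ to $X$ term by term, and noting that $pW$ vanishes at $X=0$, we get $X\,I_{m,n}=pW/c$, that is,
\[
 I_{m,n}=\frac{(X+1)\,W}{c}.
\]
It therefore suffices to show $g^3\mid W$. (If $m=0$, then $Q_0=1$ has no irreducible factor, so the statement is vacuous.)

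Because $\mathbb{Q}_2$ has characteristic $0$, the irreducible polynomial $g$ is separable. Hence $g^3\mid W$ holds as soon as every root $\alpha$ of $g$ in $\overline{\mathbb{Q}}_2$ is a root of $W$ of multiplicity at least $3$. Fix such a root $\alpha$. Since $g$ divides both $Q_m$ and $Q_n$, the point $\alpha$ is a root of each, so $\alpha$ is a root of $Q_mQ_n$ of multiplicity at least $2$. It is also a root of $W=Q_mQ_n'-Q_nQ_m'$, since each term contains a factor vanishing at $\alpha$. Moreover $p(\alpha)\neq0$: we have $Q_j(0)=1$ and $Q_j(-1)=P_j(-1)=(-1)^j$, so neither $0$ nor $-1$ is a root of $Q_m$.

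The multiplicity count is the main step, though it is short. Let $k\ge1$ be the multiplicity of $\alpha$ as a root of $W$. Since $p(\alpha)\ne0$, the polynomial $pW$ also vanishes to order exactly $k$ at $\alpha$, so $(pW)'$ vanishes to order exactly $k-1$ there. The identity $(pW)'=cQ_mQ_n$ with $c\ne0$ then gives $k-1\ge2$, so $k\ge3$. Hence $g^3\mid W$, and therefore $g^3\mid I_{m,n}$ by the closed form above. The only care needed is in deriving the shifted differential equation with the correct $p$ and sign: the sign of $p$ only rescales $c$, and the argument uses only $p(0)=0$ and $p(\alpha)\ne0$, so a sign slip there would not affect the conclusion.
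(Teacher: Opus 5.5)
Your proof is correct and follows the paper's route: the Legendre equation gives $(X(X+1)W)'=\pm\Delta\,Q_mQ_n$, so $XI_{m,n}$ is a constant multiple of $X(X+1)W$. At each root $\alpha$ of $g$, the vanishing of $W(\alpha)$ together with the double vanishing of $Q_mQ_n$ then forces a triple root.

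The only cosmetic difference is where the factor is removed. You divide out $p=X(X+1)$ and prove $g^3\mid W$, which needs both $\alpha\ne0$ and $\alpha\ne-1$. The paper instead counts derivatives of $XI_{m,n}$ directly and only needs $\alpha\ne0$.
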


\begin{proof}
We first derive an expression for $I_{m,n}$ from the Legendre differential equation, which is~\cite[Eq.~(14.2.1)]{DLMF}
\[
(1-x^2)P_j''-2xP_j'+j(j+1)P_j=0.
\]
Substituting $x=2X+1$ and using
\[
P_j'(2X+1)=\frac12Q_j'(X),\qquad
P_j''(2X+1)=\frac14Q_j''(X),
\]
we obtain
\begin{equation}\label{eq:4-6}
X(X+1)Q_j''+(2X+1)Q_j'=j(j+1)Q_j.
\end{equation}
Put
\[
\Delta=n(n+1)-m(m+1)>0,\qquad
W=Q_m'Q_n-Q_mQ_n'.
\]
By the definition of $W$,
\[
\begin{aligned}
W'&=Q_m''Q_n+Q_m'Q_n'-Q_m'Q_n'-Q_mQ_n''\\
&=Q_m''Q_n-Q_mQ_n''.
\end{aligned}
\]
Multiplying the two instances of~\eqref{eq:4-6} with $j=m,n$ by $Q_n,Q_m$, respectively, and subtracting, we obtain
\[
\begin{aligned}
X(X+1)W'+(2X+1)W
&=\bigl(m(m+1)-n(n+1)\bigr)Q_mQ_n\\
&=-\Delta Q_mQ_n.
\end{aligned}
\]
Consequently,
\begin{equation}\label{eq:4-7}
\bigl(X(X+1)W\bigr)'=-\Delta Q_mQ_n.
\end{equation}
Since $X(X+1)W(X)$ vanishes at $X=0$, integrating from $0$ to $X$ and then using~\eqref{eq:1-4} gives
\begin{equation}\label{eq:4-8}
XI_{m,n}(X)=\int_0^X Q_m(t)Q_n(t)\,dt
=-\frac{X(X+1)W(X)}{\Delta}.
\end{equation}

We next prove that every root of $g$ has multiplicity at least three in $I_{m,n}$. Let $\alpha$ be any root of $g$. Since $g\mid Q_m,Q_n$,
\[
Q_m(\alpha)=Q_n(\alpha)=W(\alpha)=0.
\]
Substituting these equalities into~\eqref{eq:4-8} and its derivatives, we obtain
\[
\begin{aligned}
(XI_{m,n})(\alpha)
&=-\frac{\alpha(\alpha+1)W(\alpha)}{\Delta}=0,\\
(XI_{m,n})'(\alpha)
&=Q_m(\alpha)Q_n(\alpha)=0,\\
(XI_{m,n})''(\alpha)
&=Q_m'(\alpha)Q_n(\alpha)+Q_m(\alpha)Q_n'(\alpha)=0.
\end{aligned}
\]
Thus $\alpha$ is a root of $XI_{m,n}$ of multiplicity at least three. Moreover,
\[
Q_m(0)=1\quad\Longrightarrow\quad\alpha\ne0,
\]
so $\alpha$ is also a root of $I_{m,n}$ of multiplicity at least three. Since $g$ is separable in characteristic zero,
\[
g(X)^3=\prod_{g(\alpha)=0}(X-\alpha)^3\mid I_{m,n}(X).
\]
Dividing $I_{m,n}$ by the monic polynomial $g^3$ in $\mathbb{Q}_2[X]$, the remainder is zero in the algebraic closure and is therefore itself zero. This proves~\eqref{eq:4-5}.
\end{proof}

\begin{proof}[Proof of Theorem~\ref{thm:1-1}]
If $m=0$, then $P_0=1$, and the assertion holds. Let $1\le m<n$, and suppose that there is a common nonzero root. By Lemma~\ref{lem:4-1}, there exist $q=2^e\ge2$ and a corresponding factor $g$ such that the $e$th binary digits of both $m$ and $n$ are $1$, and
\[
d=\deg g\ge q,\qquad
\nu(\alpha)=-1/q\quad(g(\alpha)=0).
\]
By Theorem~\ref{thm:2-2} and Lemma~\ref{lem:4-2},
\[
L_g(1/q)=d,\qquad
I_{m,n}=g^3J\quad\text{for some }J\in\mathbb{Q}_2[X].
\]
All factors have nonzero constant terms. Hence the multiplication theorem, Theorem~\ref{thm:2-3}, gives
\[
L_{I_{m,n}}(1/q)=3L_g(1/q)+L_J(1/q)\ge3d\ge3q.
\]
On the other hand, Theorem~\ref{thm:3-1} gives
\[
L_{I_{m,n}}(1/q)<3q.
\]
This is a contradiction, so $P_m$ and $P_n$ have no common nonzero root.

It remains to determine what happens at zero. Taking the constant and linear coefficients in Rodrigues' formula~\eqref{eq:1-1}, we obtain
\[
\begin{aligned}
P_{2j}(0)&=\frac{(-1)^j}{4^j}\binom{2j}{j}\ne0,\\
P_{2j+1}(0)&=0,\\
P_{2j+1}'(0)&=\frac{(-1)^j(2j+1)}{4^j}\binom{2j}{j}\ne0.
\end{aligned}
\]
Therefore, $0$ is a simple root of every Legendre polynomial of odd degree and is not a root of any Legendre polynomial of even degree. Together with the absence of common nonzero roots, this gives the monic greatest common divisor stated in the theorem.
\end{proof}

\end{document}